\newtheorem{theorem}{Theorem}
\numberwithin{theorem}{section}
\newtheorem{proposition}[theorem]{Proposition}
\newtheorem{lemma}[theorem]{Lemma}
\newtheorem{conjecture}[theorem]{Conjecture}
\newtheorem{question}[theorem]{Question}
\theoremstyle{definition}
\newtheorem{definition}[theorem]{Definition}
\newtheorem{remark}[theorem]{Remark}
\newtheorem{example}[theorem]{Example}
\newcommand{\ccl}{\bar}
\newcommand{\0}{\emptyset}
\newcommand{\fld}{\Bbbk}
\newcommand{\st}{~|~}
\newcommand{\sm}{\setminus}
\newcommand{\HH}{\tilde{H}}
\newcommand{\defeq}{\mathrel{\mathop:}=}
\newcommand{\Zz}{\mathbb{Z}}
\DeclareMathOperator{\link}{link}
\DeclareMathOperator{\depth}{depth}
\DeclareMathOperator{\sdepth}{sdepth}
\begin{document}
\title{A non-partitionable Cohen--Macaulay simplicial complex}

\author[A.\ M.\ Duval]{Art M.\ Duval}
\address{Department of Mathematical Sciences\\ University of Texas at El Paso}
\email{aduval@utep.edu}
\author[B.\ Goeckner]{Bennet Goeckner}
\address{Department of Mathematics \\ University of Kansas}
\email{bennet@ku.edu}
\author[C.\ J.\ Klivans]{Caroline J.\ Klivans}
\address{Division of Applied Mathematics and Department of Computer Science \\ Brown University}
\email{klivans@brown.edu}
\author[J.\ L.\ Martin]{Jeremy L.\ Martin}
\address{Department of Mathematics \\ University of Kansas}
\email{jlmartin@ku.edu}
\thanks{This work was partially supported by a grant from the Simons Foundation (grant number 315347 to J.L.M.)}
\date{\today}

\subjclass[2010]{%
05E45, 
13F55} 
\keywords{simplicial complex, h-vector, Cohen--Macaulay, constructibility, partitionability, Stanley depth}

\begin{abstract}
A long-standing conjecture of Stanley states that every Cohen--Macaulay simplicial complex is partitionable.  We disprove the conjecture by constructing an explicit counterexample.  Due to a result of Herzog, Jahan and Yassemi, our construction also disproves the conjecture that the Stanley depth of a monomial ideal is always at least its depth.
\end{abstract}
\maketitle

\section{Introduction}
Cohen--Macaulay simplicial complexes are ubiquitous in algebraic and topological combinatorics.  They 
were introduced in 1975 by Stanley in his celebrated proof of the Upper Bound Conjecture for spheres~\cite{Stanley_UBC}.\footnote{See~\cite{Stanley_howUBC} for his engaging and personal account of how the proof came to be.}
 The theory of Cohen--Macaulay \emph{rings} has long been of great importance in algebra and algebraic geometry; see, e.g., \cite{Rees,ZariskiSamuel,Grothendieck,Hochster,Hochster-ICM,BH}.  The connection to combinatorics via what is known as Stanley--Reisner theory was established by Hochster \cite{Hochster}, Reisner \cite{Reisner}, and Stanley~\cite{St5}; standard references for this subject are \cite{GreenBook} and \cite{BH}.

The focus of this article is the following conjecture, described by Stanley as ``a central combinatorial conjecture on Cohen--Macaulay complexes'' \cite[p.~85]{GreenBook}.
It was originally proposed by Stanley~\cite[p.~149]{Stanley} in 1979 and independently by Garsia~\cite[Remark 5.2]{Garsia} in 1980 for order complexes of Cohen--Macaulay posets.

\begin{conjecture}[Partitionability Conjecture]\label{main-conj}
Every Cohen--Macaulay simplicial complex is partitionable.
\end{conjecture}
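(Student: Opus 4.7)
The statement is Conjecture~\ref{main-conj}, which the abstract announces as being \emph{disproved}. So my plan is to construct a counterexample: an explicit Cohen--Macaulay simplicial complex that is not partitionable. The first observation is that a great deal is already known about this conjecture: shellable, and more generally constructible, complexes are both Cohen--Macaulay and partitionable, so any counterexample must live strictly in the gap between constructibility and Cohen--Macaulayness. Moreover, partitionability and Cohen--Macaulayness impose the \emph{same} $h$-vector identity (in a partition of the face poset into Boolean intervals $[R(F),F]$, the number of intervals with $|R(F)|=i$ is $h_i$), so numerical invariants alone cannot separate the two properties. The counterexample must therefore fail partitionability for genuinely structural reasons.

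The approach I would take is to work first in the \emph{relative} setting. A relative complex $(\Delta,\Gamma)$ with $\Gamma\subseteq\Delta$ has natural notions of Cohen--Macaulayness (via the depth of the relative Stanley--Reisner module) and partitionability (a Boolean-interval decomposition of $\Delta\setminus\Gamma$). Relative non-partitionability is easier to engineer than absolute non-partitionability because the faces removed by $\Gamma$ can be used to create a combinatorial bottleneck in any attempted interval decomposition. So the first step is to find a small relative CM pair $(\Delta,\Gamma)$ that is not partitionable, keeping the example small enough that non-partitionability can be verified by a finite case analysis, possibly via computer.

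The second step is a reduction lemma: converting a non-partitionable relative pair into a non-partitionable absolute complex. The natural mechanism is to glue an auxiliary CM complex along $\Gamma$ in such a way that (i) the resulting absolute complex is still Cohen--Macaulay, and (ii) any absolute partition restricts to a relative partition of $(\Delta,\Gamma)$, contradicting the choice of pair. Cohen--Macaulayness should survive by a link-based or Mayer--Vietoris argument in reduced homology, while the partitionability reduction should follow because restrictions of facets belonging to $\Delta$ must, in the glued complex, still be forced to lie inside $\Delta\sm\Gamma$.

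The main obstacle, I expect, is verifying non-partitionability itself. Partitionability is an existential combinatorial property --- one must rule out \emph{every} Boolean-interval decomposition of the face poset --- and there is no clean algebraic obstruction to appeal to, which is presumably why the conjecture stood for nearly forty years. So the hardest step will be either producing a purely local structural obstruction (a face or collection of faces for which no consistent choice of restriction map exists) or exhausting cases on a candidate small enough for direct enumeration. A secondary difficulty is certifying that the relative pair really is Cohen--Macaulay, which requires a nontrivial depth computation that I would handle by a relative version of Reisner's criterion, checking reduced homology of links in the right degrees.
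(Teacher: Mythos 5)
Your high-level architecture is the same as the paper's: find a non-partitionable Cohen--Macaulay relative pair, then glue to produce an absolute complex, with Cohen--Macaulayness preserved by a Mayer--Vietoris/Reisner argument. But there is a genuine gap in the reduction step as you describe it, and you stop short of the concrete construction, so this is not a complete proof.

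The gap is in the claim that in the glued complex, ``restrictions of facets belonging to $\Delta$ must \dots still be forced to lie inside $\Delta\setminus\Gamma$.'' This is false as stated: a facet of one copy of $X$ can perfectly well be matched, in a partitioning of the glued complex, to a face sitting in the shared subcomplex $A$, so the partition does not automatically restrict. The paper's mechanism is different: glue \emph{$N$ copies} of $X$ along $A$ with $N$ strictly larger than the total number of faces of $A$; then by pigeonhole there is some copy $X_j$ none of whose facets is matched to a face of $A$, and only for that distinguished copy does the partition restrict to a partitioning of $Q=(X,A)$, yielding the contradiction. You also need $A$ to be an \emph{induced} subcomplex of $X$ (equivalently, every minimal face of $X\setminus A$ is a vertex) so that identifying the copies along $A$ actually produces a simplicial complex and the Mayer--Vietoris argument applies; your sketch omits this hypothesis, without which the gluing step does not even typecheck.

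Beyond that, you defer the two hard existence claims: you do not exhibit a concrete relative pair that is Cohen--Macaulay and non-partitionable, and you do not verify its non-partitionability. The paper's source is Ziegler's nonshellable $3$-ball $Z$; it takes $B = Z|_{\{0,2,3,4,6,7,8\}}$ (a shellable $3$-ball), sets $Q=(Z,B)$, and proves non-partitionability of $Q$ by a short local chase forcing the vertex $5$ into two distinct intervals simultaneously --- not by exhaustive computer search. Your proposal correctly anticipates that non-partitionability will be the hard part and names a local structural obstruction as one possible route; that is indeed the route the paper takes, but as written your argument contains neither the obstruction nor the object it lives on.
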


We explicitly construct a Cohen--Macaulay complex that is not partitionable, thus disproving the Partitionability Conjecture.   In fact, we give a general method for constructing counterexamples and an explicit infinite family of non-partitionable Cohen--Macaulay complexes.  We begin by giving some background for the conjecture, which will also be directly relevant in our construction.

Two basic invariants of a simplicial complex $\Delta$ are its $f$- and $h$-vectors
\[f(\Delta)=(f_{-1}(\Delta),f_0(\Delta),\dots,f_d(\Delta)), \qquad h(\Delta)=(h_{0}(\Delta),h_1(\Delta),\dots,h_{d+1}(\Delta)),\]
where $d=\dim\Delta$.  The number $f_i = f_i(\Delta)$ is the number of $i$-dimensional faces (simplices) in~$\Delta$.   The $h$-vector is more subtle.  It carries the same information as the $f$-vector (the two are related by an invertible linear transformation), and arises naturally in algebra: the Hilbert series of the Stanley--Reisner ring of~$\Delta$ is $(1-t)^{-d}\sum_jh_j(\Delta)t^j$.  (See Section~\ref{prelim} for precise definitions.)  It is not at all apparent if the numbers $h_j(\Delta)$ have a combinatorial interpretation; for instance, they need not be positive in general.

A \emph{partitioning} of a pure simplicial complex $\Delta$ is a decomposition into pairwise-disjoint Boolean intervals whose maximal elements are exactly the facets (maximal faces) of~$\Delta$.  Partitionability was introduced by Provan~\cite{Provan} and Ball~\cite{Ball} in the context of reliability analysis.  For a partitionable complex, the $h$-numbers enumerate the minimum elements of the intervals by size.  In particular, shellable complexes are easily seen to be partitionable, and hence their $h$-vectors have this interpretation. 
The strict inclusions
\[\{\text{shellable complexes}\}\subsetneq\{\text{constructible complexes}\}\subsetneq\{\text{Cohen--Macaulay complexes}\}\]
are also well known.  For example, the nonshellable balls constructed by Rudin~\cite{Rudin} and Ziegler~\cite{Ziegler} are constructible (see also \cite{Lutz0}), and any triangulation of the dunce hat is Cohen--Macaulay but not constructible \cite[\S2]{Hachimori-2d}.
 On the other hand, the possible $h$-vectors of Cohen--Macaulay, constructible, and shellable complexes are all the same \cite[Theorem~6]{Stanley-CMC}, suggesting that their entries ought to count something explicit.
The Partitionability Conjecture
 would have provided a combinatorial interpretation of the $h$-vectors of Cohen--Macaulay complexes.

The idea of our construction is to work with \emph{relative} simplicial complexes.
Suppose $Q=(X,A)$ is a relative simplicial complex that is not partitionable, but with $X$ and $A$ Cohen--Macaulay.
Theorem~\ref{glue} gives a general method of gluing together sufficiently many copies of $X$ along $A$ to obtain a counterexample to the Partitionability Conjecture, provided that $A$ is an {induced} subcomplex of~$X$.
This reduces the problem to finding an appropriate pair~$(X,A)$.
Our starting point is the nonshellable simplicial 3-ball $Z$ constructed by Ziegler~\cite{Ziegler}, in which we find a suitable subcomplex~$A$ and in turn the desired relative complex~$Q$ (Theorem~\ref{relativenotpart}).   By refining the construction, we are able to obtain, in Theorem~\ref{smaller}, a Cohen--Macaulay non-partitionable complex that is much smaller than predicted by Theorem~\ref{glue}, with $f$-vector 
$(1, 16, 71, 98, 42)$ and $h$-vector $(1, 12, 29)$.

The existence of a Cohen--Macaulay nonpartitionable complex has an important consequence in commutative algebra.  For a polynomial ring $S=\fld[x_1,\dots,x_n]$ and a $\Zz^n$-graded $S$-module $M$, many fundamental algebraic invariants of $M$, such as its dimension and multigraded Hilbert series, can be profitably studied using combinatorics.  On the other hand, the combinatorial properties of the \emph{depth} of $M$ are less well understood.  In \cite{Stanley-LDE}, Stanley proposed a purely combinatorial analogue of depth, defined in terms of certain vector space decompositions of $M$.  This invariant, now known as the \emph{Stanley depth} and written $\sdepth M$, has attracted considerable recent attention (see \cite{WhatIs} for an accessible introduction to the subject, and \cite{Herzog-survey} for a comprehensive survey), centering around the following conjecture of Stanley \cite[Conjecture~5.1]{Stanley-LDE}:

\begin{restatable}[Depth Conjecture]{conjecture}{depthconj}
\label{depthconj}
For all $\Zz^n$-graded $S$-modules $M$,
\[\sdepth M \geq \depth M.\]
\end{restatable}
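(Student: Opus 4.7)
The plan is to reduce the Depth Conjecture from arbitrary $\Zz^n$-graded modules down to a combinatorial statement about simplicial complexes, and then prove that statement by induction. Every $\Zz^n$-graded $S$-module $M$ admits a prime filtration $0=M_0\subset M_1\subset\cdots\subset M_r=M$ whose successive quotients are shifted monomial cyclic modules $(S/P_i)(-\sigma_i)$; depth is bounded below by the minimum depth of the quotients via the depth lemma, and Stanley decompositions concatenate along such filtrations, so it suffices to prove $\sdepth(S/I)\geq\depth(S/I)$ for monomial ideals~$I$. Polarization then reduces the problem further to squarefree monomial ideals, since depth is polarization-invariant and Stanley depth is preserved up to the predictable shift in the ambient ring. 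This places us in the Stanley--Reisner setting $\fld[\Delta]=S/I_\Delta$.

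I would next invoke the Herzog--Jahan--Yassemi correspondence, which translates the existence of a Stanley decomposition of $\fld[\Delta]$ of minimum variable-set size $\geq k$ into a combinatorial partition: decompose the face poset of~$\Delta$ into Boolean intervals $[F,G]$ with $|G|\geq k$, with each top $G$ a face of~$\Delta$. Reisner's criterion expresses $\depth\fld[\Delta]$ in terms of vanishing of reduced homology of links of faces. When $\Delta$ is Cohen--Macaulay of dimension~$d$ one has $\depth\fld[\Delta]=d+1$, and the required decomposition is precisely a partitioning of~$\Delta$ in the sense of the introduction --- a decomposition of the face poset into Boolean intervals whose tops are the facets. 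The Depth Conjecture for Cohen--Macaulay Stanley--Reisner rings is thus equivalent to the Partitionability Conjecture~\ref{main-conj}, and the general non-Cohen--Macaulay case follows by applying the same recipe to the successive pieces of an appropriate dimension filtration of~$\Delta$.

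I would then attack the partitioning problem by induction on $d=\dim\Delta$. When $\Delta$ is constructible --- written $\Delta=\Delta_1\cup\Delta_2$ with $\Delta_1$, $\Delta_2$, and $\Delta_1\cap\Delta_2$ all Cohen--Macaulay of the expected dimensions --- inductive partitions of the pieces glue along $\Delta_1\cap\Delta_2$, recovering the classical fact that constructible complexes are partitionable. Beyond the constructible regime one could try to locate a vertex $v$ with $\link_\Delta(v)$ and the deletion $\Delta\sm\{v\}$ both Cohen--Macaulay and partitionable, merging their partitions across the star of~$v$. The main obstacle is the remaining Cohen--Macaulay complexes that lie outside the reach of any such induction --- for instance triangulations of the dunce hat, which admit no constructible decomposition and no evident local simplification --- and I expect this step, which \emph{is} the combinatorial content of the Partitionability Conjecture, to be by far the hardest part of the argument.
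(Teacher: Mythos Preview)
The statement you are trying to prove is a \emph{conjecture}, and the entire point of the paper is to \emph{disprove} it.  There is no proof in the paper to compare your proposal against; rather, the paper constructs an explicit Cohen--Macaulay simplicial complex~$C_3$ that is not partitionable (Theorem~\ref{smaller}), and then invokes the Herzog--Jahan--Yassemi equivalence to conclude that $\sdepth\fld[C_3]<\depth\fld[C_3]$, refuting Conjecture~\ref{depthconj}.  So any purported proof of the Depth Conjecture is necessarily wrong somewhere.

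Your proposal contains a specific, locatable error.  You assert that for a constructible complex $\Delta=\Delta_1\cup\Delta_2$, ``inductive partitions of the pieces glue along $\Delta_1\cap\Delta_2$, recovering the classical fact that constructible complexes are partitionable.''  This is false: partitionings of $\Delta_1$ and $\Delta_2$ do \emph{not} in general glue to a partitioning of $\Delta$, because faces of $\Delta_1\cap\Delta_2$ may be assigned to intervals on both sides in incompatible ways, and there is no mechanism to resolve the overlap.  Indeed, the paper's counterexample $C_3$ is itself constructible (Remark~\ref{constructible-counterexample}), so constructibility does \emph{not} imply partitionability.  Your filtration reduction in the first paragraph is also suspect --- Stanley decompositions of successive quotients $M_i/M_{i-1}$ do not obviously assemble into a Stanley decomposition of~$M$ with the needed lower bound on $\min|X_i|$ --- but even granting all of your reductions, the inductive scheme you outline for partitionability collapses at exactly the step you flagged as ``by far the hardest,'' and in fact that step is impossible.
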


Herzog, Jahan, and Yassemi proved \cite[Corollary~4.5]{HJY} that when $I$ is the Stanley--Reisner ideal of a Cohen--Macaulay complex $\Delta$, the inequality $\sdepth S/I\geq\depth S/I$ is equivalent to the partitionability of $\Delta$.  Therefore, our counterexample to the Partitionability Conjecture disproves the Depth Conjecture as well.  We exhibit a smaller counterexample to the Depth Conjecture using a relative complex in Remark~\ref{smaller-relative};  see Section~\ref{sdepth-subsection}.

It was also previously not known whether all constructible complexes were
partitionable; see, e.g., \cite[\S4]{Hachimori-CCR}. The counterexample we obtain is not only
Cohen--Macaulay, but in fact constructible.  Therefore, even constructibility does not imply partitionability.

\section{Preliminaries}
\label{prelim}

\subsection{Simplicial and relative simplicial complexes}  Throughout the paper, all complexes will be finite.  Let $V$ be a finite set.  A \emph{simplicial complex} on $V$ is a collection $\Delta$ of subsets of $V$ such that
whenever $\sigma \in \Delta$ and $\tau \subseteq \sigma$, then $\tau \in \Delta$.  Equivalently, $\Delta$ is an order ideal in the Boolean poset $2^V$.  The symbol $|\Delta|$ denotes the standard geometric realization of $\Delta$.  The elements of $\Delta$ are called the \emph{faces} of~$\Delta$, and the elements of~$V$ are \emph{vertices}.  Maximal faces are called \emph{facets}.  The \emph{dimension} of a face $\sigma$ is $\dim\sigma=|\sigma|-1$, and the dimension of~$\Delta$ is $\dim\Delta=\max\{\dim\sigma\st\sigma\in\Delta\}$.  We often write $\Delta^d$ to indicate that $\dim\Delta=d$.
A complex is \emph{pure} if all maximal faces have the same dimension.  
A \emph{subcomplex} of $\Delta$ is a simplicial complex $\Gamma$ with $\Gamma\subseteq\Delta$.  A subcomplex is an \emph{induced subcomplex} if it is of the form
\[\Delta|_W\defeq\{\sigma\in\Delta\st\sigma\subseteq W\}\]
for some $W\subseteq V$. 

In the construction of our counterexample, we will work with the more general class of \emph{relative simplicial complexes}.  A relative complex $\Phi$ on $V$ is a subset of $2^V$ that is \emph{convex}: if $\rho,\tau\in\Phi$ and $\rho\subseteq\sigma\subseteq\tau$, then $\sigma\in\Phi$.  We sometimes refer to simplicial complexes as ``absolute'' to distinguish them from relative complexes.

Every relative complex can be expressed as a pair $\Phi=(\Delta,\Gamma):=\Delta\sm\Gamma$, where $\Delta$ is a simplicial complex and $\Gamma\subseteq\Delta$ is a subcomplex.  Topologically, $\Phi$ corresponds to the quotient space $|\Delta|/|\Gamma|$.
Note that there are infinitely many possibilities for the pair $\Delta,\Gamma$.  The unique minimal expression is obtained by letting $\Delta=\ccl{\Phi}$ be the \emph{combinatorial closure} of $\Phi$, i.e., the smallest simplicial complex containing $\Phi$ as a subset, and setting $\Gamma=\Delta\sm\Phi$.  Note that in this case $\dim\Gamma<\dim\Delta$, because the maximal faces of $\Delta$ are precisely those of $\Phi$.

The notation $\HH_i(\Delta)$ denotes the $i^{th}$ reduced simplicial homology group with coefficients in~$\Zz$.  (The underlying ring does not matter for our purposes.)
The simplicial homology groups $\HH_i(\Phi)$ of a relative complex $\Phi=(\Delta,\Gamma)$ are just the relative homology groups $\HH_i(\Delta,\Gamma)$ in the usual topological sense (see, e.g., \cite{Hatcher}); in particular, the homology groups of $\Delta$, $\Gamma$, and $\Phi$ fit into a long exact sequence.

The \emph{$f$-vector} of an (absolute or relative) complex $\Delta^d$ is $f(\Delta)=(f_{-1}, f_0, \ldots, f_d)$, where $f_i = f_i(\Delta)$ is the number of $i$-dimensional faces of~$\Delta$.  Note that $f_{-1}(\Delta) = 1$ for every absolute complex other than the void complex $\Delta=\0$.
The \emph{$h$-vector} $h(\Delta) = (h_0, h_1, \dots, h_{d+1})$ is defined by
\[h_k=\sum_{i=0}^k (-1)^{k-i} \binom{d+1-i}{k-i} f_{i-1}, \qquad 0\leq k\leq d+1.\]
In particular, the $f$- and $h$-vectors determine each other.

The \emph{link} of a face $\sigma\in\Delta$ is defined as
\[\link_\Delta (\sigma) \defeq  \{ \tau \in \Delta\st \tau \cap \sigma = \0,\ \tau \cup \sigma \in \Delta \}.\]
Observe that if $\Delta^d$ is pure and $\dim\sigma=k$, then $\dim\link_\Delta(\sigma)=d-k-1$.  If $\sigma$ is a facet of $\Delta$ then $\link_\Delta(\sigma)=\{\0\}$, the trivial complex with only the empty face, and if $\sigma\not\in\Delta$ then we set $\link_\Delta(\sigma)$ to be the void complex with no faces.

If $\Phi = (\Delta, \Gamma)$ is a relative complex and $\sigma \in \Delta$, we can define the relative link by
\[
\link_{\Phi}(\sigma)
~=~ (\link_\Delta(\sigma),\link_\Gamma(\sigma)).
\]
It is easy to check that this construction is intrinsic to $\Phi$, 
i.e., it does not depend on the choice of the pair $\Delta,\Gamma$.  Note that $\link_\Phi(\sigma)$ is not necessarily a subset of~$\Phi$.  

\subsection{Cohen--Macaulay simplicial complexes}
A ring is \emph{Cohen--Macaulay} if its depth equals its (Krull) dimension.  Reisner's criterion \cite[Theorem~1]{Reisner} states that Cohen--Macaulayness of the Stanley--Reisner ring \cite[\S II.1]{GreenBook} of a simplicial complex can be expressed in terms of simplicial homology, and we will take this criterion as our definition.  The relative version of Reisner's criterion is Theorem~5.3 of~\cite{Stanley_relative}.
\begin{theorem} \cite{Reisner,Stanley_relative}\label{combinatorics-defn}
A simplicial complex $\Delta$ is \emph{Cohen--Macaulay} if for every face $\sigma \in \Delta$, 
\begin{equation} \label{reisner}
\HH_i(\link_\Delta(\sigma))=0 \quad\text{ for } i<\dim\link_\Delta(\sigma).
\end{equation}
Similarly, a relative complex $\Phi = (\Delta, \Gamma)$ is \emph{Cohen--Macaulay} if for every $\sigma \in \Delta$, 
$$\HH_i(\link_\Delta(\sigma),\link_\Gamma(\sigma))=0 \quad\text{ for } i<\dim\link_\Delta(\sigma).$$
\end{theorem}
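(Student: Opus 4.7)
The plan is to reduce Reisner's criterion to the local-cohomology characterization of depth, combined with \emph{Hochster's formula} for the local cohomology of a Stanley--Reisner ring.

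Let $S=\fld[x_v:v\in V]$ with irrelevant maximal ideal $\mathfrak{m}$. Recall Grothendieck's theorem: for any finitely generated graded $S$-module $M$, $\depth M = \min\{\,i : H^i_{\mathfrak{m}}(M)\neq 0\,\}$. Since $\dim\fld[\Delta] = \dim\Delta+1$, the ring $\fld[\Delta]$ is Cohen--Macaulay if and only if $H^i_{\mathfrak{m}}(\fld[\Delta]) = 0$ for every $i<\dim\Delta+1$.

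Next I would compute $H^i_{\mathfrak{m}}(\fld[\Delta])$ using the \v{C}ech complex on $x_1,\ldots,x_n$. Decomposing according to the $\Zz^n$-grading, one identifies the degree-$(-\sigma)$ strand with a shift of the reduced cochain complex of $\link_\Delta\sigma$ (with all other multigraded pieces acyclic), yielding Hochster's formula
\[
H^i_{\mathfrak{m}}(\fld[\Delta])_{-\sigma}\;\cong\;\HH_{i-|\sigma|-1}(\link_\Delta\sigma;\fld)\qquad (\sigma\in\Delta);
\]
the switch between cohomology and homology is harmless since we work over a field with finite complexes. Setting $j=i-|\sigma|-1$ and using $\dim\link_\Delta\sigma=\dim\Delta-|\sigma|$, the vanishing $H^i_{\mathfrak{m}}(\fld[\Delta])=0$ for $i<\dim\Delta+1$ is exactly $\HH_j(\link_\Delta\sigma)=0$ for all $\sigma\in\Delta$ and $j<\dim\link_\Delta\sigma$. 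This proves the absolute case~\eqref{reisner}.

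For the relative case, form the Stanley--Reisner module $\fld[\Phi]\defeq I_\Gamma/I_\Delta$, well-defined because $I_\Delta\subseteq I_\Gamma$ whenever $\Gamma\subseteq\Delta$; its $\fld$-basis consists of monomials supported on faces of $\Phi=(\Delta,\Gamma)$. The short exact sequence $0\to \fld[\Phi]\to\fld[\Delta]\to\fld[\Gamma]\to 0$ induces a long exact sequence in local cohomology; in each multidegree $-\sigma$ this matches, piece by piece, the long exact sequence in simplicial homology of the pair $(\link_\Delta\sigma,\link_\Gamma\sigma)$, yielding the relative analogue
\[
H^i_{\mathfrak{m}}(\fld[\Phi])_{-\sigma}\;\cong\;\HH_{i-|\sigma|-1}(\link_\Delta\sigma,\link_\Gamma\sigma;\fld).
\]
Reindexing exactly as above converts Cohen--Macaulayness of $\fld[\Phi]$ into the stated homological criterion.

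The principal obstacle is Hochster's formula itself: one must carefully match the degree-$(-\sigma)$ piece of the \v{C}ech complex on $x_1,\ldots,x_n$ with the reduced cochain complex of $\link_\Delta\sigma$ and verify the vanishing of all other multigraded pieces, a direct but intricate sign-bookkeeping exercise. Once the absolute version is in hand, the relative formula is a routine consequence of the long-exact-sequence comparison above, as carried out in~\cite{Stanley_relative}.
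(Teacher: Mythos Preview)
The paper does not prove this statement at all: it is presented as a known result, attributed to Reisner and Stanley, and explicitly adopted \emph{as the definition} of Cohen--Macaulayness (``we will take this criterion as our definition''). There is thus no in-paper proof to compare against.

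Your sketch is the standard route to Reisner's criterion---Grothendieck's characterization of depth via local cohomology together with Hochster's formula for $H^i_{\mathfrak{m}}(\fld[\Delta])$ in each multidegree---and the outline is correct, including the relative extension through the module $I_\Gamma/I_\Delta$ and comparison of long exact sequences. You rightly flag that the real content is the identification of the \v{C}ech complex in degree $-\sigma$ with the cochain complex of $\link_\Delta\sigma$; without carrying this out your write-up is a plan rather than a proof. If the goal were an actual self-contained argument, that computation would need to be supplied (or replaced by an explicit reference such as \cite[Theorem~5.3.8]{BH}). One small point: your argument as written shows equivalence with vanishing over the field~$\fld$, whereas the paper's statement uses $\Zz$-coefficients; you would need either the universal coefficient theorem or to note that the paper is implicitly content with vanishing over every field.
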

In fact, Cohen--Macaulayness is a \emph{topological} invariant: it depends only on the homeomorphism type of the geometric realization $|\Delta|$. This was proved by Munkres \cite{Munkres}.  Topological invariance holds for relative complexes as well~\cite[Corollary~III.7.3]{GreenBook}.  Importantly, if $|\Delta|$ is homeomorphic to a ball or to a sphere, then $\Delta$ is Cohen--Macaulay \cite[\S2]{Munkres}.

The following technical lemma will be central to our construction.

\begin{lemma} \label{gluing-lemma}
Let $\Delta_1$ and $\Delta_2$ be $d$-dimensional Cohen--Macaulay simplicial complexes on disjoint vertex sets.  
Let $\Gamma$ be a Cohen--Macaulay simplicial complex of dimension~$d$ or~$d-1$, and suppose that each $\Delta_i$ contains a copy of $\Gamma$ as an induced subcomplex.  Then the complex $\Omega$ obtained by identifying the two copies of $\Gamma$ (or ``gluing together $\Delta_1$ and $\Delta_2$ along $\Gamma$'') is Cohen--Macaulay.
\end{lemma}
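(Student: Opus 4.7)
My approach is to verify Reisner's criterion (Theorem~\ref{combinatorics-defn}) directly for $\Omega$: for every face $\sigma\in\Omega$, I want $\HH_i(\link_\Omega(\sigma))=0$ when $i<\dim\link_\Omega(\sigma)=d-1-\dim\sigma$, where the dimension equality uses that Cohen--Macaulay complexes are pure and so $\Omega$ is pure of dimension~$d$. The first step is the set-level identification $\Omega=\Delta_1\cup\Delta_2$ with $\Delta_1\cap\Delta_2=\Gamma$; one inclusion of the intersection is obvious, and the other uses the induced-subcomplex hypothesis, which forces any face supported on the identified vertex set $V(\Gamma)$ to lie in $\Gamma$. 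In particular every face of $\link_\Omega(\sigma)$ lies in $\link_{\Delta_1}(\sigma)\cup\link_{\Delta_2}(\sigma)$.

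I would then split into two cases. If $\sigma\in\Omega\sm\Gamma$, then without loss of generality $\sigma\in\Delta_1$ and $\sigma$ contains some vertex $v\notin V(\Gamma)$, since otherwise induced-ness would place $\sigma$ in $\Gamma$. Because $v$ is not a vertex of $\Delta_2$ in $\Omega$, every face of $\Omega$ containing $\sigma$ already lies in~$\Delta_1$, so $\link_\Omega(\sigma)=\link_{\Delta_1}(\sigma)$ and Cohen--Macaulayness of $\Delta_1$ gives the desired vanishing. The real content is the case $\sigma\in\Gamma$: here $\link_\Omega(\sigma)=\link_{\Delta_1}(\sigma)\cup\link_{\Delta_2}(\sigma)$, and the same induced-ness argument applied to the intersection shows that these two links meet in exactly $\link_\Gamma(\sigma)$. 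I would then feed this into the reduced Mayer--Vietoris sequence
\[\HH_i(\link_{\Delta_1}(\sigma))\oplus\HH_i(\link_{\Delta_2}(\sigma))\longrightarrow\HH_i(\link_\Omega(\sigma))\longrightarrow\HH_{i-1}(\link_\Gamma(\sigma)).\]
The left-hand term vanishes for $i<d-1-\dim\sigma$ by Cohen--Macaulayness of $\Delta_1$ and $\Delta_2$, and Cohen--Macaulayness of $\Gamma$ kills the right-hand term for $i-1<\dim\link_\Gamma(\sigma)$; in the tight case $\dim\Gamma=d-1$ this becomes exactly $i<d-1-\dim\sigma$, while the case $\dim\Gamma=d$ is even easier.

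The main obstacle, and presumably the reason the statement insists on induced-ness, is ensuring that $\Delta_1\cap\Delta_2$ equals $\Gamma$ as a simplicial complex rather than merely sharing $\Gamma$'s vertex set: without this, the Mayer--Vietoris intersection would be $\link$-of-$\sigma$ in $\Delta_1\cap\Delta_2\supsetneq\Gamma$, a complex for which we have no Cohen--Macaulayness hypothesis and hence no control on the homology in the relevant range. Everything else---handling both allowed dimensions of $\Gamma$, and noting that degenerate links such as $\{\0\}$ (which occur when $\sigma$ is a facet of one of the pieces) have vanishing reduced homology in every degree and so pose no problem---is routine bookkeeping.
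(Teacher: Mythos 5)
Your proof is correct and follows essentially the same path as the paper's: identify $\Omega$ with $\Delta_1\cup\Delta_2$ and $\Gamma$ with $\Delta_1\cap\Delta_2$ (using induced-ness), verify Reisner's criterion by cases on whether $\sigma\in\Gamma$, and apply reduced Mayer--Vietoris to $\link_{\Delta_1}(\sigma)\cup\link_{\Delta_2}(\sigma)$ with intersection $\link_\Gamma(\sigma)$ in the interesting case. The only cosmetic differences are that you spell out the dimension bookkeeping and the role of induced-ness a bit more explicitly than the paper does.
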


\begin{proof}
It is clear that $\Omega$ is a CW-complex.  The requirement that each copy of $\Gamma$ is an \emph{induced} subcomplex of $\Delta_i$ means that $\Omega$ is in fact a simplicial complex (because faces with the same underlying vertex set will be identified).  It remains to show that $\Omega$ is Cohen--Macaulay.  Henceforth, to simplify the notation, we will identify $\Gamma$ with $\Delta_1\cap\Delta_2$, so that $\Omega$ is identified with $\Delta_1\cup\Delta_2$.

Let $\sigma$ be a face of $\Omega$.  Note that
\begin{equation} \label{link-union-int}
\begin{aligned}
\link_\Omega (\sigma) &=  \{ \tau \in \Omega\st \tau \cap \sigma = \0,\ \tau \cup \sigma \in \Omega \} = \link_{\Delta_1} (\sigma) \cup \link_{\Delta_2} (\sigma),\\
\link_\Gamma (\sigma) &=  \{ \tau \in \Gamma\st \tau \cap \sigma = \0,\ \tau \cup \sigma \in \Gamma \} = \link_{\Delta_1} (\sigma) \cap \link_{\Delta_2} (\sigma).
\end{aligned}
\end{equation}
First, suppose that $\sigma \in \Delta_1\sm\Delta_2$. Then Reisner's criterion~\eqref{reisner} holds for $\sigma$ because $\link_\Omega(\sigma) = \link_{\Delta_1} (\sigma)$, and ${\Delta_1}$ is Cohen--Macaulay.  Likewise, Reisner's criterion holds for faces of $\Delta_2\sm\Delta_1$.

On the other hand, suppose that $\sigma\in\Gamma$. 
Then the observations~\eqref{link-union-int} give rise to a reduced Mayer-Vietoris sequence
\[
\cdots\to\HH_i(\link_{\Delta_1} (\sigma)) \oplus  \HH_i(\link_{\Delta_2} (\sigma)) 
\to  \HH_i(\link_\Omega(\sigma)) 
\to \HH_{i-1}(\link_\Gamma(\sigma))\to\cdots.
\]
But since $\Delta_1,\Delta_2,\Gamma$ are Cohen--Macaulay and
$\dim\Gamma\geq\dim\Delta_1-1$, the Mayer-Vietoris sequence implies that $\HH_i(\link_\Omega(\sigma))=0$ for all $i<d-\dim\sigma-1$.  This is precisely the statement that Reisner's criterion holds for $\sigma$.
\end{proof}

Iterating Lemma~\ref{gluing-lemma}, we obtain immediately:
\begin{proposition} \label{gluing-prop}
Let $\Delta_1, \ldots, \Delta_n$ be $d$-dimensional Cohen--Macaulay simplicial complexes on disjoint vertex sets.  
Let $\Gamma$ be a Cohen--Macaulay simplicial complex of dimension~$d-1$ or~$d$, and suppose that each~$\Delta_i$ contains a copy of~$\Gamma$ as an induced subcomplex.  Then the complex~$\Omega$ obtained from $\Delta_1,\dots,\Delta_n$ by identifying the $n$ copies of~$\Gamma$ is Cohen--Macaulay.
\end{proposition}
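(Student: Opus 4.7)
The plan is to prove Proposition~\ref{gluing-prop} by induction on~$n$, using Lemma~\ref{gluing-lemma} as the inductive step. The case $n=1$ is immediate (taking $\Omega=\Delta_1$), and the case $n=2$ is precisely Lemma~\ref{gluing-lemma}.

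For the inductive step, assuming the result for $n-1$, I would first define $\Omega_{n-1}$ to be the complex obtained by gluing $\Delta_1,\ldots,\Delta_{n-1}$ along~$\Gamma$, so that $\Omega_{n-1}$ is a $d$-dimensional Cohen--Macaulay complex by the inductive hypothesis. Then I would apply Lemma~\ref{gluing-lemma} to the pair $(\Omega_{n-1},\Delta_n)$ along the common subcomplex~$\Gamma$, yielding $\Omega = \Omega_{n-1}\cup\Delta_n$. To apply the lemma, I need to check two things: that the vertex sets of $\Omega_{n-1}$ and $\Delta_n$ intersect exactly in the vertex set of~$\Gamma$, and that $\Gamma$ is an induced subcomplex of $\Omega_{n-1}$ (it is already induced in $\Delta_n$ by hypothesis).

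The disjointness of the vertex sets follows from the hypothesis that $\Delta_1,\dots,\Delta_n$ have pairwise disjoint vertex sets before the identification: after gluing, the vertex set of $\Omega_{n-1}$ is the disjoint union of $V(\Gamma)$ and the sets $V(\Delta_i)\sm V(\Gamma)$ for $i<n$, so its intersection with $V(\Delta_n)$ is exactly $V(\Gamma)$. The inducedness claim is the one subtle point: if $W=V(\Gamma)$ and $\sigma\in\Omega_{n-1}$ satisfies $\sigma\subseteq W$, then $\sigma$ must lie entirely in some single~$\Delta_i$ (since vertices from distinct $\Delta_i\sm\Gamma$ and $\Delta_j\sm\Gamma$ share no face of $\Omega_{n-1}$), and then $\sigma\subseteq W=V(\Gamma)$ combined with $\Gamma$ being induced in $\Delta_i$ forces $\sigma\in\Gamma$.

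The main obstacle, such as it is, is simply the bookkeeping for this inducedness verification; there is no genuine topological or homological difficulty beyond what has already been handled in Lemma~\ref{gluing-lemma}. Once both hypotheses are checked, Lemma~\ref{gluing-lemma} yields that $\Omega$ is Cohen--Macaulay, completing the induction.
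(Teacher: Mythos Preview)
Your proposal is correct and takes exactly the approach the paper indicates: the paper's entire proof is the single sentence ``Iterating Lemma~\ref{gluing-lemma}, we obtain immediately,'' and your induction on~$n$ with the verification that $\Gamma$ remains induced in $\Omega_{n-1}$ is precisely the routine bookkeeping that sentence suppresses.
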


\subsection{Shellability, partitionability, and constructibility}
\begin{definition} \label{define-shelling}
Let $\Delta$ be a pure simplicial complex.  A \emph{shelling} of~$\Delta$ is a total ordering $F_1,\dots,F_n$ of its facets so that for every $j$, the set
\[\{\sigma\subseteq F_j \st \sigma\not\subseteq F_i \text{ for all } i<j\}\]
has a unique minimal element $R_j$. 
\end{definition}

The $h$-vector of a shellable complex has a simple combinatorial interpretation:
\begin{equation} \label{interpret-h}
h_k(\Delta) = \#\{j \st \#R_j=k\}.
\end{equation}
In particular $h_k(\Delta)\geq0$ for all $k$, and in fact $h_k(\Delta)=0$ implies $h_\ell(\Delta)=0$ for all $\ell>k$ (a consequence of \cite[Theorem~5.1.15]{BH}).
Shellable complexes are Cohen--Macaulay, although the converse is not true: well-known counterexamples include any triangulation of the dunce hat, as well as the nonshellable balls constructed by Rudin \cite{Rudin} and Ziegler \cite{Ziegler}.  On the other hand, Cohen--Macaulay complexes satisfy the same conditions on the $h$-vector, so it is natural to look for a combinatorial interpretation of their $h$-vectors.

\begin{definition} \label{define-partitioning}
Let $\Delta$ be a pure simplicial complex with facets $F_1,\dots,F_n$.  A \emph{partitioning} $\mathcal{P}$ of $\Delta$ is a decomposition into pairwise-disjoint Boolean intervals
\begin{equation*}
\Delta=\bigsqcup_{i=1}^n\, [R_i,F_i]
\end{equation*}
where $[R_i,F_i]=\{\sigma\in\Delta\st R_i\subseteq\sigma\subseteq F_i\}$.  We say that each $F_i$ is \emph{matched} to the corresponding $R_i$.
\end{definition}

Clearly, shellable complexes are partitionable.
If $\Delta$ is partitionable, then its $h$-vector automatically carries the combinatorial interpretation \eqref{interpret-h} \cite[Proposition~III.2.3]{GreenBook}.  Moreover, Definitions~\ref{define-shelling} and~\ref{define-partitioning}, and the interpretation of the $h$-vector, carry over precisely from absolute to relative complexes.

\begin{example}
A partitionable complex need not be Cohen--Macaulay, much less shellable.  The following example is due to Bj\"orner \cite[p.~85]{GreenBook}.  Let $\Delta$ be the pure 2-dimensional complex with facets $123,124,134,234,156$ (abbreviating $\{1,2,3\}$ by 123, etc.).  This complex is not Cohen--Macaulay because vertex 1 fails Reisner's criterion~\eqref{reisner}, but it is partitionable:
\[\Delta=[\0,156]\cup[2,123]\cup[3,134]\cup[4,124]\cup[234,234].\]
In particular, $h(\Delta)=(1,3,0,1)$, which is not the $h$-vector of any Cohen--Macaulay complex
(since $h_2=0$ and $h_3>0$).
\end{example}

Constructibility, introduced by Hochster \cite{Hochster}, is a combinatorial condition intermediate between shellability and Cohen--Macaulayness.

\begin{definition} \label{define-constructible}
  A complex $\Delta^d$ is \emph{constructible} if it is a simplex, or if it can be written as $\Delta=\Delta_1\cup\Delta_2$, where $\Delta_1$, $\Delta_2$, and $\Delta_1\cap\Delta_2$ are constructible of dimensions $d$, $d$, and $d-1$ respectively.
\end{definition}

Hachimori~\cite{Hachimori-CCR} investigated the question of whether constructibility implies partitionability. Our counterexample to the Partitionability Conjecture is in fact constructible, resolving this question as well.

\section{The counterexample}\label{sec-counterexample}

We first give a general construction that reduces the problem of finding a counterexample to the problem of constructing a certain kind of non-partitionable Cohen--Macaulay relative complex.

\begin{theorem}\label{glue}
Let $Q=(X,A)$ be a relative complex such that 
\begin{enumerate}
\item[(i)] $X$ and $A$ are Cohen--Macaulay;
\item[(ii)] $A$ is an induced subcomplex of $X$ of codimension at most~1; and 
\item[(iii)] $Q$ is not partitionable.
\end{enumerate}
Let $k$ be the total number of faces of $A$, let $N>k$, and let $C=C_N$ be the simplicial complex constructed from $N$ disjoint copies of $X$ identified along the subcomplex $A$.
Then $C$ is Cohen--Macaulay and not partitionable.  
\end{theorem}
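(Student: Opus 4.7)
My plan is to dispatch Cohen--Macaulayness at once by invoking Proposition~\ref{gluing-prop}: the $N$ copies of~$X$ are each $d$-dimensional Cohen--Macaulay on disjoint vertex sets, $A$ is Cohen--Macaulay of dimension $d$ or $d-1$ by hypothesis~(ii), and each copy contains $A$ as an induced subcomplex. The substantive content is non-partitionability, which I would establish by contradiction: suppose $\mathcal{P}=\{[R_\alpha,F_\alpha]\}$ is a partitioning of $C$.

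The first and most delicate step is to show that $\mathcal{P}$ splits cleanly across the $N$ copies. If the facet $F_\alpha$ lies in $A$, then $F_\alpha$ must in fact be a facet of $X$ (otherwise a strictly larger face of some~$X_j$ would contradict maximality of $F_\alpha$ in~$C$), and then every subset of $F_\alpha$ lies in $A$, so $[R_\alpha,F_\alpha]\subseteq A$. Otherwise $F_\alpha\notin A$; since $A$ is induced in $X$, the face $F_\alpha$ has a vertex lying outside $V(A)$, and that vertex belongs to a unique copy $X_j$, forcing $F_\alpha\in X_j$ and $[R_\alpha,F_\alpha]\subseteq X_j$. This yields a decomposition $\mathcal{P}=\mathcal{P}_A\sqcup\mathcal{P}_1\sqcup\cdots\sqcup\mathcal{P}_N$, with $\mathcal{P}_A$ supported in $A$ and each $\mathcal{P}_j$ supported in $X_j$.

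The rest is a pigeonhole argument. For each~$j$, let $B_j$ be the set of roots $R_\alpha$ of intervals in $\mathcal{P}_j$ that happen to lie in~$A$. The $B_j$ are pairwise disjoint subsets of~$A$, because any shared root $R\in B_j\cap B_{j'}$ with $j\neq j'$ would sit inside two distinct intervals of $\mathcal{P}$, violating the partition property. Since $\sum_j|B_j|\leq k<N$, some $B_{j^*}$ is empty. Then every interval $[R,F]\in\mathcal{P}_{j^*}$ has $R\notin A$, which forces the entire interval to avoid $A$ (if $\sigma\in A$ and $R\subseteq\sigma$, then $R\in A$ by the hereditary property). Combined with the observation that every face of~$Q$ corresponds to a face of $X_{j^*}\sm A$, which must be covered by some interval of $\mathcal{P}$ and hence of $\mathcal{P}_{j^*}$, this shows that $\mathcal{P}_{j^*}$ descends to a partitioning of the relative complex~$Q$, contradicting~(iii).

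The main obstacle I anticipate is the bookkeeping of the splitting step: correctly verifying that no interval leaks between copies of~$X$ and that facets of $C$ lying in $A$ really are facets of $X$. Once that classification is in place, the pigeonhole bound $N>k$ automatically completes the argument.
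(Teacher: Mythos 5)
Your proof is correct and takes essentially the same route as the paper: Cohen--Macaulayness follows immediately from Proposition~\ref{gluing-prop}, and non-partitionability is by pigeonhole on the (at most~$k$) intervals whose roots lie in~$A$, yielding a copy $X_{j^*}$ whose intervals restrict to a partitioning of~$Q$. Your version is somewhat more explicit than the paper's in two places -- the classification of each interval as lying in $A$ or in a unique $X_j$ (using that $A$ is induced), and the verification that an interval with root outside $A$ is entirely disjoint from~$A$ -- but these are exactly the bookkeeping steps the paper leaves implicit, not a different argument.
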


\begin{proof}
First, $C$ is Cohen--Macaulay by Proposition~\ref{gluing-prop}.
Second, suppose that $C$ has a partitioning $\mathcal{P}$.   Let $X_1, X_2, \ldots, X_N$ be the $N$ copies of $X$.   By the pigeonhole principle, since $N>k$, there is some copy of $X$, say $X_N$, none of whose facets is matched to a face in $A$.
 Let $[R_1,F_1],\dots,[R_\ell,F_\ell]$ be the intervals in $\mathcal{P}$ for which $F_i\in X_N$; then
\begin{equation} \label{oops}
\bigcup_{i=1}^\ell [R_i,F_i] \quad\subseteq\quad X_N\sm A.
\end{equation}
No other interval in $\mathcal{P}$ can intersect $X_N\sm A$ nontrivially, so in fact equality must hold in \eqref{oops}.  But then \eqref{oops} is in fact a partitioning of $X_N\sm A=Q$, which was assumed to be non-partitionable.
\end{proof}

\begin{remark} \label{induced-min-vertex}
It is easy to see that a subcomplex $A\subset X$ is an induced subcomplex if and only if every minimal face of $X\sm A$ has dimension~0.  Therefore, this condition may be viewed as a restriction on the relative complex $(X,A)$.
\end{remark}

\subsection{The construction}
Throughout, we abbreviate the simplex on vertices $\{v_1,\dots,v_k\}$ by $v_1\cdots v_k$.
Our construction begins with Ziegler's nonshellable
$3$-ball~$Z$, which is a nonshellable 
triangulation of the $3$-ball with $10$ vertices labeled $0,1,\dots,9$, and the following $21$ facets~\cite[\S4]{Ziegler}:
\[\begin{array} {ccccccc}
0123, & 0125, & 0237, & 0256, & 0267, & 1234, & 1249,\\
1256, & 1269, & 1347, & 1457, & 1458, & 1489, & 1569,\\
1589, & 2348, & 2367, & 2368, & 3478, & 3678, & 4578.
\end{array}\]
The complex $Z$ is not shellable, but it is partitionable, Cohen--Macaulay 
and, in fact, constructible \cite{Hachimori}.

Let $B$ be the induced subcomplex $Z|_{\{0,2,3,4,6,7,8\}}$.
That is, $B$ is the pure 3-dimensional complex with facets
\[\begin{array} {ccccccc}
0237, & 0267, & 2367, & 2368, & 2348, & 3678, & 3478.
\end{array}\]
The given order is a shelling of~$B$; in particular $B$ is Cohen--Macaulay. Define $Q$ to be the relative complex $Q=(Z,B)$.  Then $Q$ is also Cohen--Macaulay by~\cite[Corollary~3.2]{Art1}.

The facets of $Q$ are
\begin{equation} \label{Qfacets}
\begin{array}{ccccccc}
1249, & 1269, & 1569, & 1589, & 1489, & 1458, & 1457, \\
4578, & 1256, & 0125, & 0256, & 0123, & 1234, & 1347.
\end{array}
\end{equation}
The minimal faces of $Q$ are just the vertices 1, 5, 9.
We can picture $Q$ easily by considering its combinatorial closure $\ccl{Q}$, that is, the 3-dimensional simplicial complex generated by the facets \eqref{Qfacets}.  In fact $\ccl{Q}$ is a shellable ball; the ordering of facets given in \eqref{Qfacets} is a shelling.
The complement $A=\ccl{Q}\sm Q=\ccl{Q}|_{\{0,2,3,4,6,7,8\}}$ is the shellable 2-ball with facets
\begin{equation}\label{Afacets}
\begin{array}{ccccc}
026, & 023, & 234, & 347, & 478.
\end{array}
\end{equation}
Thus $Q=(\ccl{Q},A)$.  The $f$- and $h$-vectors of these complexes are
\begin{align*}
f(\bar{Q}) &= (1,10,31,36,14), & h(\bar{Q}) &= (1,6,7,0,0),\\
f(A) &= (1,7,11,5,0),  & h(A) &= (1,4,0,0,0),\\
f(Q) &= (0,3,20,31,14),  & h(Q)&=(0,3,11,0,0).
\end{align*}
The 1-skeleton of $\ccl{Q}$ is shown in Figure~\ref{show-cclQ}.
\begin{figure}
\includegraphics[width=2.75in]{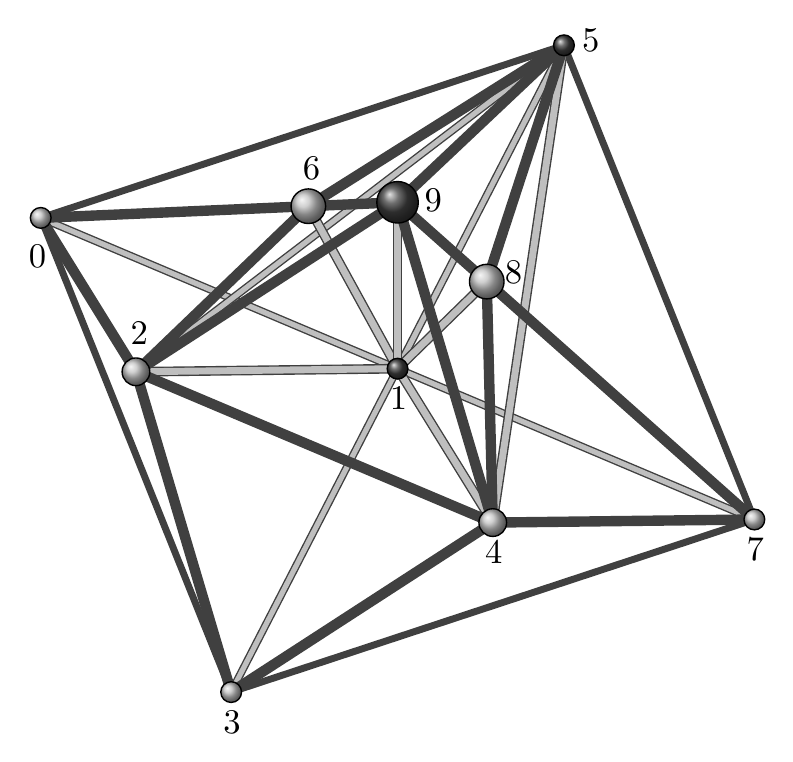}
\caption{A perspective view of the 1-skeleton of $\ccl{Q}$ from the top.  Dark edges are exterior edges visible from the top; light edges are interior edges, or exterior edges at the bottom.  Light vertices are in $A$.} \label{show-cclQ}
\end{figure}
The triangles on the boundary of $Q$, i.e., those contained in exactly one facet, are illustrated in Figure~\ref{showQ}, which shows the boundary of $Q$ as seen
from the front (left) and back (right).  The five shaded triangles are the facets of $A$, and hence are missing from $Q$.  
\begin{figure}
\includegraphics[width=4in]{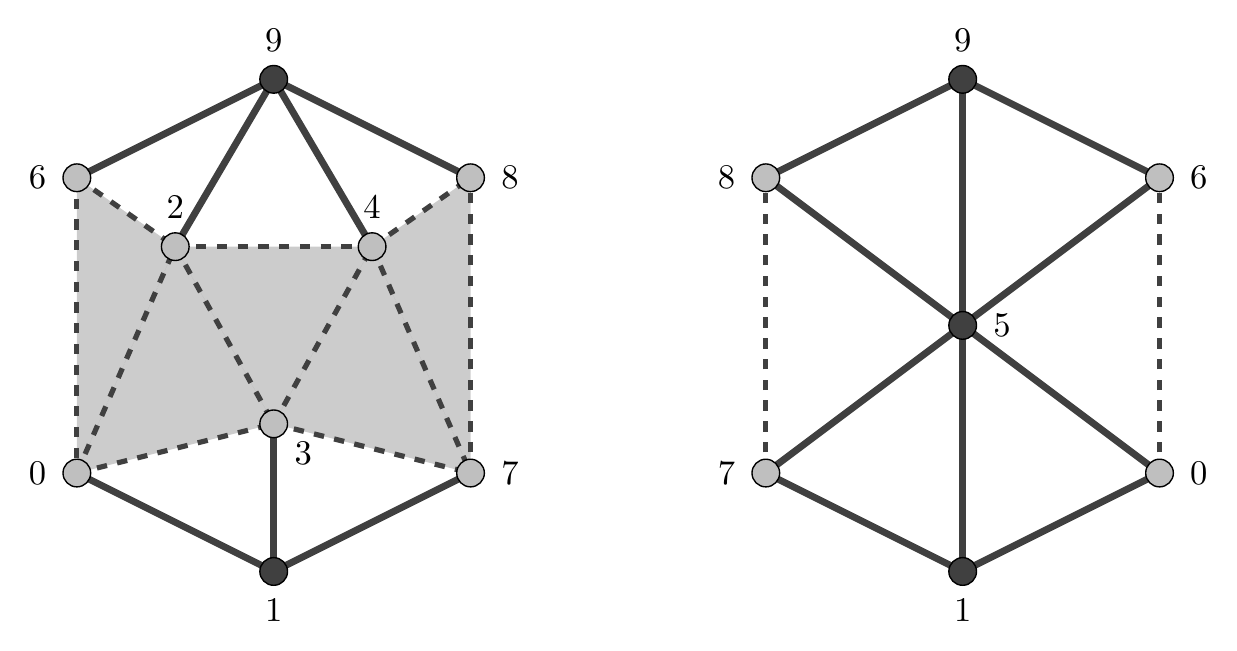}
\caption{\emph{Left:} A front view of $Q$.   \emph{Right:} A back view of~$Q$. The shaded and dashed faces are in $A$.} \label{showQ}
\end{figure}

In what follows, we will use the fact 
that the triple transposition $\tau=(0\;7)(2\;4)(6\;8)$ is a simplicial automorphism of $\bar{Q}$.  This symmetry is apparent as a reflection through the plane containing vertices 1, 3, 5, and 9 in Figure~\ref{show-cclQ}, and as a vertical reflection in each part of Figure~\ref{showQ}.

\begin{theorem}\label{relativenotpart}
The relative complex $Q$ is not partitionable.
\end{theorem}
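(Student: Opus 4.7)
My plan is to assume that $Q$ admits a partitioning and derive a contradiction. First, the $h$-vector $h(Q) = (0, 3, 11, 0, 0)$ forces any partitioning to consist of exactly three intervals with singleton minima and eleven intervals with edge (two-element) minima, since $h_0 = h_3 = h_4 = 0$. Because $1, 5, 9$ are the only vertices of $Q$, the three singleton minima must be $\{1\}, \{5\}, \{9\}$, matched to three distinct facets that we may call $F_1, F_5, F_9$, chosen from the facets of $Q$ that contain $1$, $5$, and $9$ respectively.

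Next, I would exploit the low-degree combinatorics of $Q$: both the edge $\{5, 9\}$ and the triangle $\{1, 5, 9\}$ lie in only the two facets $1569$ and $1589$. Therefore the intervals containing these faces must have their top element in $\{1569, 1589\}$ and their minima among the appropriate subsets of $\{1, 5, 9\}$, heavily restricting the choices of $F_5, F_9$ and the matchings assigned to $1569$ and $1589$. In parallel, I would invoke the symmetry $\tau = (0\;7)(2\;4)(6\;8)$, which is a simplicial automorphism of $\bar Q$ that preserves $A$ (and hence $Q$): it fixes $1234$ and $1249$ and swaps $1569 \leftrightarrow 1589$, $1269 \leftrightarrow 1489$, $1458 \leftrightarrow 1256$, $1457 \leftrightarrow 0125$, $4578 \leftrightarrow 0256$, $0123 \leftrightarrow 1347$, roughly halving the number of distinct cases for $(F_1, F_5, F_9)$.

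For each surviving choice of $(F_1, F_5, F_9)$, I would then propagate the constraints: each remaining facet $F$ must be matched to an edge lying in $F \cap Q$ (i.e., an edge of $F$ containing at least one of $1, 5, 9$), and each face of $Q$ must be covered by exactly one interval. The aim is to show that in every case some face of $Q$ is either left uncovered or forced to be covered twice, contradicting the existence of a partitioning. The main obstacle will be the sheer volume of this case analysis: even after the symmetry and the small-support restrictions trim the possibilities, a systematic exact-cover-style propagation is still required, tractable by a careful hand argument or a short computer check.
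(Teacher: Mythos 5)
Your plan is built on correct observations (the $h$-vector $(0,3,11,0,0)$ forces exactly three singleton minima $\{1\},\{5\},\{9\}$ and eleven edge minima; the automorphism $\tau$; the small supports of $59$ and $159$), but the crucial step --- actually carrying out the exact-cover propagation and exhibiting a contradiction in every case --- is left as an acknowledged ``main obstacle.'' As written, this is a strategy for a computer search, not a proof: you have not shown that any particular configuration fails, so there is a genuine gap.

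The paper's argument takes a substantially different and sharper route that avoids enumeration entirely. Rather than partitioning into cases according to the choice of $(F_1,F_5,F_9)$, it traces a single forced chain starting from the \emph{boundary} triangle $489$: since $489$ lies only in the facet $1489$, it must belong to $I_{1489}$; since $48\in A$ is not a face of $Q$, the triangle $148$ cannot also lie in $I_{1489}$ (the interval would then have to contain $148\cap 489=48$), hence $148\in I_{1458}$; another exclusion via $48$ shows $45\notin I_{1458}$, so $45$ lies in $I_{1457}$ or $I_{4578}$; and because $45,57\subset 457$ and $457$ lies only in those two facets, $45$ and $57$ land in the same interval, forcing $5=45\cap 57$ into $I_{1457}$ or $I_{4578}$. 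Applying $\tau$ gives the disjoint conclusion $5\in I_{0125}$ or $5\in I_{0256}$, an immediate contradiction. This uses the ``missing'' faces of $A$ (especially the edge $48$) as obstructions rather than the $h$-vector count, and it pins down where vertex $5$ \emph{must} live rather than enumerating where the three singletons \emph{might} live. If you want to salvage your approach, you would need to either complete the case analysis explicitly (or by machine) or, better, reorganize it around a short forcing chain like the paper's, which is what turns the idea into a proof.
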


\begin{proof} 
Suppose that $Q$ admits a partitioning $\mathcal{P}$.  We will show that a particular minimal face, namely vertex 5, must simultaneously belong to two intervals of the partitioning, which is a contradiction.

For each facet $F\in Q$, denote by $I_F=[R_F,F]$ the interval of~$\mathcal{P}$ with top element~$F$. 

For each triangle $T$ on the boundary, there is only one interval that can contain~$T$.  
In particular, $489\in I_{1489}$.  It follows that
$148\not\in I_{1489}$,
for otherwise $148\cap489=48\in I_{1489}$, but $48\not\in Q$.  Therefore
$148\in I_{1458}$,
since $1458$ is the only other facet containing $148$.  Then
$458\not\in I_{1458}$,
again because $148\cap458=48\not\in Q$, and thus $45\notin I_{1458}$.
The other two facets that contain 45 are 4578 and 1457.  Therefore, either $45\in I_{4578}$ or $45\in I_{1457}$.  On the other hand, these are also the only two facets that contain the edge 57.  Since
\begin{equation*} \label{observe:5}
45,57 ~\subset~ 457 ~\subset~ 1457,4578
\end{equation*}
the edges 45 and 57 must belong to the same interval of $\mathcal{P}$ (namely, whichever one of $I_{1457},I_{4578}$ contains 457).
But then that interval must also contain $45\cap57=5$.  We have shown that
\begin{equation} \label{observe:6}
\text{either } 5 \in I_{1457} \text{ or } 5 \in I_{4578}.
\end{equation}

By applying the automorphism $\tau$ to the above argument, we conclude that 
\begin{equation} \label{observe:7}
\text{either } 5 \in I_{0125} \text{ or } 5 \in I_{0256}.
\end{equation}
But \eqref{observe:6} and \eqref{observe:7} cannot both be true, and we have reached a contradiction.\end{proof}

We can now give an explicit description of our counterexample to the Partitionability Conjecture.
Since $X=\ccl{Q}$ and $A$ are both shellable balls, they are Cohen--Macaulay.  We may therefore apply Theorem~\ref{glue}, with $N=25$ (since $A$ has 24 faces total).

\begin{theorem}\label{first-counterexample}
Let $X=\ccl{Q}$ be the combinatorial closure of $Q$, and let $A=X\sm Q$.  That is, $X$ and $A$ are the absolute simplicial complexes whose facets are listed in~\eqref{Qfacets} and~\eqref{Afacets}, respectively.  Then the simplicial complex $C_{25}$ constructed in
Theorem~\ref{glue} is Cohen--Macaulay and non-partitionable.  
\end{theorem}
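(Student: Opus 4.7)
The plan is to verify that the data $(X,A)$ and the chosen value $N=25$ satisfy the three hypotheses of Theorem~\ref{glue} and the cardinality condition $N>k$, and then simply invoke that theorem.

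First, I would check hypothesis~(i): both $X=\bar{Q}$ and $A$ are Cohen--Macaulay. This is immediate from the remarks just before the theorem statement, since $X$ was shown to be a shellable ball (the listing in~\eqref{Qfacets} is a shelling) and $A$ was shown to be a shellable 2-ball (the listing in~\eqref{Afacets} is a shelling); shellability implies Cohen--Macaulayness.

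Next, I would verify hypothesis~(ii). By construction $A=\bar{Q}|_{\{0,2,3,4,6,7,8\}}$, so $A$ is an induced subcomplex of $X$ essentially by definition (alternatively, by Remark~\ref{induced-min-vertex} together with the fact that the minimal faces of $Q$ are the single vertices $1,5,9$). The codimension of $A$ in $X$ is $\dim X-\dim A = 3-2 = 1$, so the codimension-at-most-one condition holds. Hypothesis~(iii), that $Q$ is not partitionable, is exactly the content of Theorem~\ref{relativenotpart}.

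Finally, I would bound $k$, the total number of faces of $A$. Reading off $f(A)=(1,7,11,5,0)$ from the table of $f$- and $h$-vectors computed earlier gives $k=1+7+11+5=24$, so the choice $N=25>k$ is admissible. Applying Theorem~\ref{glue} then yields that $C_{25}$ is Cohen--Macaulay and not partitionable, completing the proof. There is no real obstacle here: the theorem is essentially a packaging of Theorems~\ref{glue} and~\ref{relativenotpart}, and the only thing to check is the bookkeeping on shellings, inducedness, and the face count of $A$.
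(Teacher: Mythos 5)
Your proof is correct and matches the paper's argument exactly: the paper likewise justifies Theorem~\ref{first-counterexample} by noting that $X$ and $A$ are shellable balls (hence Cohen--Macaulay), that $A$ is induced of codimension~1, that $Q$ is non-partitionable by Theorem~\ref{relativenotpart}, and that $A$ has $24$ faces so $N=25$ works in Theorem~\ref{glue}.
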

The $f$-vector is $f(C_{25})=f(A)+25f(Q)=(1,82,511,780,350)$.

For this particular construction, the full power of Theorem~\ref{glue} is not necessary; there is a much smaller counterexample.

\begin{theorem}\label{smaller}
  Let $Q$, $A$, and $X=\ccl{Q}$ be as described above.  Then the simplicial complex $C_3$ obtained by gluing together three copies of $X$ along $A$ is Cohen--Macaulay and non-partitionable.
\end{theorem}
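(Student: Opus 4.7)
Cohen--Macaulayness is immediate from Proposition~\ref{gluing-prop}: $X=\bar{Q}$ and $A$ are both shellable balls, hence Cohen--Macaulay; $A$ is an induced subcomplex of $X$ of codimension one; and $C_3$ is built by iterated gluing along $A$.

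For non-partitionability I plan to refine the proof of Theorem~\ref{relativenotpart}, tracking exactly where the forcing chain can ``escape'' in $C_3$ by routing through the faces $48$ or $26$, which lie in $A$ and are therefore genuine faces of~$C_3$. Label the three copies of $X$ as $X_1,X_2,X_3$, writing $1_i,5_i,9_i$ for the copies of the $Q$-vertices in $X_i$ and keeping the $A$-vertices $\{0,2,3,4,6,7,8\}$ shared. Assume for contradiction that $\mathcal{P}$ is a partitioning of $C_3$. In each $X_i$, re-run the side-1 chain of Theorem~\ref{relativenotpart}: $4\,8\,9_i\in I_{1_i 4 8 9_i}$, then $1_i 4 8 \in I_{1_i 4 5_i 8}$, then $4 5_i 8\in I_{4 5_i 7 8}$, then both $45_i$ and $5_i 7$ in a common $I_F$ with $F\in\{1_i 4 5_i 7,\,4 5_i 7 8\}$, forcing $5_i\in I_{1_i 4 5_i 7}\cup I_{4 5_i 7 8}$. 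Every step of this chain used ``$48\notin Q$'' in the original proof; in $C_3$, the only alternative at each step places $48$ in some $I_F$ with $F$ a facet of $X_i$ containing $48$. Define a \emph{48-escape} in $X_i$ to be the event that $48\in I_F$ for some $F\in\{1_i 4 8 9_i,\,1_i 4 5_i 8,\,4 5_i 7 8\}$; the key dichotomy to establish is that if no 48-escape occurs in $X_i$, then the side-1 conclusion holds.

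Applying the automorphism $\tau=(0\,7)(2\,4)(6\,8)$, which exchanges $48$ with $26$, yields the symmetric dichotomy: if no \emph{26-escape} occurs in $X_i$ (defined analogously for the facets $\{1_i 2 6 9_i,\,1_i 2 5_i 6,\,0 2 5_i 6\}$), then $5_i\in I_{0 1_i 2 5_i}\cup I_{0 2 5_i 6}$. Since the four facets $1_i 4 5_i 7,\,4 5_i 7 8,\,0 1_i 2 5_i,\,0 2 5_i 6$ are pairwise distinct and $5_i$ lies in a unique interval of $\mathcal{P}$, each copy $X_i$ must admit a 48-escape or a 26-escape. The finishing step is now pigeonhole: the face $48$ of $C_3$ belongs to exactly one interval of $\mathcal{P}$, and that interval's top facet lies in exactly one of the $X_i$, so at most one copy admits a 48-escape; the same holds for $26$ and 26-escapes. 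Thus at most $1+1=2$ of the three copies can supply their required escape, a contradiction.

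The main technical obstacle is verifying the two dichotomies: one must carefully re-run each step of the original forcing chain under the standing assumption that $48\notin I_F$ for every $F\in\{1_i 4 8 9_i,\,1_i 4 5_i 8,\,4 5_i 7 8\}$, and check that no alternative branch of the argument (for example $45_i\in I_{1_i 4 5_i 8}$ or $4 5_i 8\in I_{1_i 4 5_i 8}$) is compatible with this assumption. These are direct but somewhat intricate case checks, and the symmetric side-2 version then follows by $\tau$; once they are in place, the pigeonhole finish takes one line, and the tightness of the bound ``$2<3$'' explains why $C_3$, rather than $C_2$, is the smallest counterexample produced by this construction.
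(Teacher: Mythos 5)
Your proposal is correct and takes essentially the same approach as the paper: the paper also re-runs the forcing chain of Theorem~\ref{relativenotpart} inside a copy of $X$ and observes that the only obstruction is $48$ or $26$ landing in an interval topped by a facet of that copy, then finishes by pigeonhole. The only cosmetic difference is that the paper first selects the ``good'' copy by pigeonhole and then re-runs the argument once, whereas you re-run it in all three copies to extract a per-copy dichotomy (escape or forced location of $5_i$) and apply the pigeonhole at the end; these are logically equivalent.
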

\begin{proof}
Suppose that $C_3$ is partitionable.  By the pigeonhole principle, at least one of the three copies of $Q$ inside $C_3$ has no facets matched to either edge~48 or its image under $\tau$, edge~26.  These two edges are the only two faces of $A$ that occur in the argument of Theorem~\ref{relativenotpart}.  Therefore, that argument applies to this copy of $Q$, and once again we conclude that \eqref{observe:6} and \eqref{observe:7} must both hold, a contradiction.
\end{proof}

The $f$-vector is $f(C_3) = f(A)+3f(Q) = (1,16,71,98,42)$.  We do not know if there exists a smaller counterexample (for example, the complex $C_2$ obtained by gluing two copies of $X$ together along $A$ is partitionable).  In particular, it is still open whether every \emph{two}-dimensional Cohen--Macaulay simplicial complex  is partitionable; see Hachimori \cite{Hachimori-2d}.

We have previously observed that $X$ and $A$ are shellable.  We note that $X$ and $A$ are contractible, and it is easily seen that $X$ deformation-retracts onto~$A$, so $C_3$ is contractible as well, although it is not homeomorphic to a ball.

\begin{remark} \label{smaller-relative}
There is a much smaller \emph{relative} simplicial complex that is Cohen--Macaulay but not partitionable, with $f$-vector $(0,0,5,10,5)$. 
This complex can be written as $Q'=(X',A')$, where $X'=\overline{Q'}=Z|_{\{1,4,5,7,8,9\}}$ is the complex with facets
\[1589, \ 1489, \ 1458, \ 1457, \ 4578,\]
and $A'$ is the (non-induced) subcomplex of $X'$ with facets
\[489, \ 589, \ 578, \ 157.\]
These complexes are shellable balls of dimensions~3 and~2 respectively (the given orders of facets are shelling orders), and $A'$ is contained in the boundary of $X'$ (note that each facet in $A'$ is contained in only one facet of $X'$), so $Q'$ is Cohen--Macaulay by \cite[Corollary~5.4]{Stanley_relative}.
On the other hand, one can check directly that there is no partitioning of $Q'$.
Because $A'$ is not an induced subcomplex, it is not 
possible to obtain a counterexample to the Partitionability Conjecture by applying Theorem~\ref{glue}.
\end{remark}

\begin{remark} \label{constructible-counterexample}
It is easily seen that $C_3$ is constructible.  Therefore, it furnishes a counterexample not only to the Partitionability Conjecture, but also to the conjecture that every constructible simplicial complex is partitionable \cite[\S4]{Hachimori-CCR}.  Furthermore, since all constructible complexes are Cohen--Macaulay \cite[p.~219]{BH}, the constructibility and non-partitionability of $C_3$ are sufficient to disprove the Partitionability Conjecture.
\end{remark}

\subsection{Stanley depth}\label{sdepth-subsection}
Let $\fld$ be a field and $S=\fld[x_1,\dots,x_n]$, and let $M$ be a $\Zz^n$-graded $S$-module.  A \emph{Stanley decomposition} $\mathcal{D}$ of $M$ is a vector space decomposition
\[M = \bigoplus_{i=1}^r \fld[X_i]\cdot m_i\]
where each $X_i$ is a subset of $\{x_1,\dots,x_n\}$ and each $m_i$ is a homogeneous element of $M$.
The \emph{Stanley depth} of $M$ is defined as
\[\sdepth M = \max\limits_{\mathcal{D}} \left\{\min(|X_1|,\dots,|X_r|)\right\},\]
where $\mathcal{D}$ ranges over all Stanley decompositions of~$M$.  If $\Phi$ is an (absolute or relative) simplicial complex, then we define its Stanley depth to be the Stanley depth of its associated Stanley--Reisner ring or module.
This invariant has received substantial recent attention \cite{WhatIs,Herzog-survey}, centering on the Depth Conjecture of Stanley \cite[Conjecture~5.1]{Stanley-LDE}, which we now restate.
\depthconj* 

Herzog, Jahan and Yassemi \cite[Corollary~4.5]{HJY} proved that if $\Delta$ is a Cohen--Macaulay simplicial complex whose Stanley--Reisner ring \cite[\S II.1]{GreenBook} is $\fld[\Delta] := S/I_{\Delta}$ (so that $\depth\fld[\Delta]=\dim\fld[\Delta]=\dim\Delta+1$), then Conjecture~\ref{depthconj} holds for $\fld[\Delta]$ if and only if $\Delta$ is partitionable.  Therefore, our construction provides a counterexample to the Depth Conjecture.  Katth\"{a}n has conjectured that the inequality $\sdepth S/I\geq\depth S/I-1$ holds for every monomial ideal~$I$; for a detailed exposition and the evidence for this conjecture, see \cite{Katthan-Betti}.

A smaller counterexample to Conjecture~\ref{depthconj} is provided by the relative complex $Q'$ in Remark~\ref{smaller-relative}. 
The depth of each of $C_3$ and $Q'$ is easily seen to be $4$, but the Stanley depth of each of $C_3$ and $Q'$ is $3$.  The Stanley depth computations were made by Katth\"{a}n \cite{Lukas}, using the algorithm developed by Ichim and Zarojanu \cite{IchZar}.

\section{Open questions}\label{CQ}

Now that we know that Cohen--Macaulayness and even constructibility are not sufficient to guarantee partitionability, it is natural to ask
what other conditions do suffice.  Hachimori defined a related but more restricted class of \emph{strongly constructible} complexes and showed that they are always partitionable~\cite[Corollary~4.7]{Hachimori-CCR}.  Here are two additional possibilities, inspired by what our counterexample $C_3$ is \emph{not}.  First, $C_3$ is not homeomorphic to a ball, because the triangles in $A$ are each contained in three facets.  On the other hand, balls are Cohen--Macaulay, motivating the following question:

\begin{question}
Is every simplicial ball partitionable?
\end{question}

This conjecture is true if we further assume the ball is convexly realizable, by \cite[Proposition~III.2.8]{GreenBook}; see also \cite{KS}.  On the other hand, there exist non-convex simplicial balls in dimensions as small as~3; see, e.g., \cite{Lutz1,Lutz2}.

Garsia \cite[Remark~5.2]{Garsia} proposed the Partitionability Conjecture for the special class of order complexes of \emph{Cohen--Macaulay posets} (see also \cite{Bac1,Bac2,BGS}), which give rise to balanced Cohen--Macaulay simplicial complexes.
Recall that a $d$-dimensional simplicial complex is \emph{balanced} if its vertices can be colored with $d+1$ colors so that every facet has one vertex of each color.  For instance, if $P$ is a ranked poset, then its order complex is easily seen to be balanced by associating colors with ranks.  The complex $\ccl{Q}$ with facets listed in \eqref{Qfacets} is not balanced (because its 1-skeleton is not 4-colorable), hence neither is $C_3$ or $C_{25}$, nor indeed $C_N$ for any~$N$.

\begin{question}
Is every balanced Cohen--Macaulay simplicial complex partitionable?
\end{question}

Although Cohen--Macaulay complexes are not necessarily partitionable,
their $h$-vectors are still nice; they are always non-negative and in fact coincide
with the $h$-vectors of shellable complexes.  Without the Partitionability Conjecture, the question remains:

\begin{question}
What does the $h$-vector of a Cohen--Macaulay simplicial complex count?
\end{question}

One answer is given by~\cite{DuvalZhang}, where it is shown that every
simplicial complex can be decomposed into \emph{Boolean trees}
indexed by iterated Betti numbers; see \cite[Corollary~3.5]{DuvalZhang}.  The starting point of that paper is a conjecture of Kalai \cite[Conjecture~22]{Kalai} that any simplicial complex can
be partitioned into intervals in a way related to algebraic shifting.
Kalai's conjecture would have implied that simplicial complexes could be
decomposed into Boolean intervals.
Such a decomposition into intervals, however, would have implied the Partitionability Conjecture.  Hence
our result provides a counterexample to Kalai's conjecture.
Moreover, the decomposition in~\cite{DuvalZhang} may be best possible at this level of generality.  

\section*{Acknowledgements}
We are grateful to Pedro Felzenszwalb for valuable discussions and
generous help with coding and Matlab implementations.  
We thank Margaret Bayer, Louis Billera, Gil Kalai, and Christos Athanasiadis
for helpful discussions and suggestions.
The open-source computer
algebra system Sage \cite{Sage} and Masahiro Hachimori's online
library of simplicial complexes \cite{Hachimori} were valuable
resources.

\bibliographystyle{alpha}
\bibliography{biblio-fullnames}
\end{document}